\documentclass{article}
\usepackage[utf8]{inputenc}

\usepackage{amsthm}
\usepackage{amsmath}
\usepackage{amssymb}
\usepackage{url}
\usepackage{float}
\usepackage{bm}

\usepackage{graphicx}
\usepackage{mathtools}

\usepackage{hyperref}
    \hypersetup{
    colorlinks=true,%
    citecolor=blue,%
    filecolor=black,%
    linkcolor=blue,%
    urlcolor=black,%
    }
\usepackage{verbatim}
\usepackage{listings}
\usepackage{matlab-prettifier}
\usepackage{todonotes}

\DeclareMathOperator*{\argmax}{arg\,max}
\DeclareMathOperator*{\argmin}{arg\,min}

\newcommand{\R}{\mathbb{R}}

\usepackage[ruled,vlined]{algorithm2e}

\newcommand{\prox}{\operatorname{prox}}
\newcommand{\Id}{\operatorname{Id}}

\newtheorem{theorem}{Theorem}[section]

\newtheorem{proposition}{Proposition}[section]
\theoremstyle{remark}

\theoremstyle{definition}

\newtheorem{lemma}[theorem]{Lemma}
\usepackage{graphicx}
\usepackage{fancyhdr} 
\usepackage[top=3cm,bottom=3cm,left=3.2cm,right=3.2cm,headsep=10pt]{geometry}
\usepackage[ruled,vlined]{algorithm2e}
\numberwithin{equation}{section}
\title{Proximity operator of the weighted squared $\ell_{2,\infty}$ norm with applications}
\author{
Sergio López-Rivera\footnote{Departamento de Matemática, Universidad Tecnica Federico Santa Maria, Santiago, Chile.
		E-mail: \href{mailto:sergio.lopez@dim.uchile.cl}{sergio.lopezr@usm.cl}}}
\date{\today}

\makeatletter
\newcommand{\otherlabel}[2]{\protected@edef\@currentlabel{#2}\label{#1}}
\makeatother

\begin{document}

 \maketitle
 \begin{abstract}
In this paper, we derive a closed-form for the proximity operator of the weighted squared $\ell_{2,\infty}$ norm. Moreover, we derive a closed-form for the proximity operator of a weakly convex version of this function. The proof involves using known results for compute the proximity operator of a supremum function, which requires finding a solution of an auxiliary problem. We find the explicit solution of this auxiliary problem by finding the KKT multipliers and the critical point associated to the first-order optimality conditions. Additionally, we present applications to denoising problems and weighted max-min dispersion problems.
 
 \end{abstract}
 \textbf{Keywords} proximity operator, supremum function, splitting algorithms, weighted maxmin dispersion problem.

	\section{Introduction}
    The proximity operator (also called proximal mapping) is essential in convex analysis and optimization. It appears in the called proximal splitting algorithms \cite{rockafellar1976monotone, vu2013splitting, chambolle2011first, lions1979splitting}, which are used to solve optimization problems that have applications in several fields of engineering and applied mathematics as image processing, denoising, stochastic traffic theory, LASSO problems, evolution inclusions, variational inequalities, machine learning, partial differential equations, mean field games, among others \cite{attouch2016strongly, briceno2023primal,facchinei2003finite, gabay1983chapter,mercier1979lectures, attouch2008alternating}. In this paper, we compute the proximity operator of the squared $\ell_{2,\infty}$ norm with weights, as well as of the weakly convex version of this function. In the case when every weight $\omega_{i}$ is equal to $1$, we recover the results in \cite{briceno2025dro} and \cite{lopez2026projected} for the convex case and weakly convex case, respectively. In addition, we provide an application to a denoising problem for the convex case and to max-min dispersion problems with weights for the weakly convex case.
    
\textbf{Notation:} The euclidean norm of a vector $x\in\R^{n}$ is denoted by $\|x\|$ and the inner product is denoted by $\langle x,y\rangle$ for all $x,y\in\R^{n}$. We denote $\R_{+}=\left[0,+\infty\right[$ and $\R_{++}=\left]0,+\infty\right[$. The probability simplex is defined by $\Delta_{N}:=\{p\in\R_{+}^{N}\,:\,\sum_{i=1}^{N}p_{i}=1\}$. The set of the convex, lower semicontinuous and proper functions from $\R^{n}$ to $\R\cup\{+\infty\}$ is denoted by $\Gamma_{0}(\R^{n})$. Given $f\colon\R^{n}\rightarrow\R\cup\{+\infty\}$ and $\lambda>0$ the Moreau envelope is
\begin{align}
\label{def_mor_env}
(\forall x\in\R^{n})\,\,e_{\lambda}f(x)=\inf_{y\in\R^{n}}\left\{f(y)+\dfrac{1}{2\lambda}\|y-x\|^{2}\right\}.
\end{align}
If $f\in\Gamma_{0}(\R^{n})$, then the infimum in \eqref{def_mor_env} is attained in an unique point, denoted by $\prox_{\lambda f}(x)$ and the operator $x\mapsto \prox_{\lambda f}(x)$ is called the proximity operator of $f$. Given $\rho\geq 0$, the set of the $\rho$-weakly convex functions is
\begin{align*}
\Gamma_{\rho}(\R^{n})=\left\{f\colon\R^{n}\rightarrow\R\cup\{+\infty\}\,:\,f+\dfrac{\rho}{2}\|\cdot\|^{2}\in\Gamma_{0}(\R^{n})\right\}.
\end{align*}
If $f\in\Gamma_{\rho}(\R^{n})$ and $\lambda<1/\rho$, then the proximity operator of $f$ with parameter $\lambda$ is well defined \cite[Theorem~3.4(d)]{attouch1993approximation}. For a nonempty closed convex set $S\subset \R^{n}$, the distance function and the projection mapping are defined as follows:
$$
d(x,S):=\inf_{y\in S}\Vert x-y\Vert \textrm{ and } P_{S}(x)=\displaystyle\argmin_{y\in S}\|x-y\|.
$$
The (convex) subdifferential of a function $f\in\Gamma_{0}(\R^{n})$ is denoted by $\partial f$. If $f\in\Gamma_{\rho}(\R^{n})$, the subdifferential of $f$ is
\begin{align*}
(\forall x\in\R^{n})\quad\partial f(x)=\partial\left(f+\frac{\rho}{2}\|\cdot\|^{2}\right)-\rho x,
\end{align*}
 where the subdifferential on the
right-hand side is the convex subdifferential.

\section{Main results}
\subsection{Convex case}
In this section, we provide a closed-form for the proximity operator of the following function
\begin{align}
\label{fun_conv_1}
(\forall \bm{x}\in\R^{nN})\quad f(\bm{x})=\max_{1\leq i\leq N}\omega_{i}\|x_{i}\|^{2},
\end{align}
where, for all $i\in\{1,\ldots,N\}$, $\omega_{i}>0$.
\begin{proposition}
\label{prox_conv_1}
Let $f$ the function in \eqref{fun_conv_1}. Let $\bm{x}\in\R^{nN}$ and $\lambda>0$. For every $i\in\{1,\ldots,N\}$, denote $\lambda_{i}=\lambda\omega_{i}$. Then
\begin{align*}
\prox_{\lambda f}(\bm{x})=\left(\dfrac{x_{i}}{1+2\lambda_{i}\overline{p}_{i}}\right)_{i=1}^{N},
\end{align*}
where $\overline{p}\in\R^{N}$ is a solution to
\begin{align}
\label{prob_aux_1}
\max_{p\in\Delta_{N}}\ell(p):=\sum_{i=1}^{N}\dfrac{\alpha_{i}p_{i}}{1+2\lambda_{i}p_{i}},
\end{align}
and, for all $i\in\{1,\ldots,N\}$, $\alpha_{i}=\omega_{i}\|x_{i}\|^{2}$. 
\end{proposition}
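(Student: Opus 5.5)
We write $f=\max_{i}f_i$ with $f_i(\bm{x})=\omega_i\|x_i\|^2$, so that $f(\bm{x})=\max_{p\in\Delta_N}\sum_{i}p_i\omega_i\|x_i\|^2$. The proximity problem then becomes a convex--concave saddle-point problem,
\begin{align*}
\min_{\bm{y}\in\R^{nN}}\max_{p\in\Delta_N}\; \Phi(\bm{y},p):=\sum_{i=1}^{N}p_i\omega_i\|y_i\|^2+\dfrac{1}{2\lambda}\|\bm{y}-\bm{x}\|^2 .
\end{align*}
The function $\Phi$ is strongly convex and continuous in $\bm{y}$, concave (in fact linear) and continuous in $p$, and $\Delta_N$ is compact. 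A minimax theorem (Sion) therefore lets us swap $\min$ and $\max$. This is essentially the known formula for the proximity operator of a supremum of convex functions that the abstract mentions: the prox equals the minimizer of the inner problem evaluated at a maximizer of the dual function.

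\textbf{Inner minimization.} For fixed $p\in\Delta_N$ the problem separates across blocks. For block $i$ we minimize $p_i\omega_i\|y_i\|^2+\frac{1}{2\lambda}\|y_i-x_i\|^2$. Setting the gradient to zero gives
\begin{align*}
y_i(p)=\dfrac{x_i}{1+2\lambda_i p_i}.
\end{align*}
Substituting back, block $i$ contributes
\begin{align*}
p_i\omega_i\dfrac{\|x_i\|^2}{(1+2\lambda_ip_i)^2}+\dfrac{1}{2\lambda}\dfrac{(2\lambda_ip_i)^2\|x_i\|^2}{(1+2\lambda_ip_i)^2}.
\end{align*}
Using $\lambda_i=\lambda\omega_i$, the second term equals $\frac{2\lambda\omega_i^2p_i^2\|x_i\|^2}{(1+2\lambda_ip_i)^2}$. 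Adding the two terms gives $\frac{\omega_ip_i\|x_i\|^2(1+2\lambda_ip_i)}{(1+2\lambda_ip_i)^2}=\frac{\alpha_ip_i}{1+2\lambda_ip_i}$. Hence the dual function $g(p):=\min_{\bm{y}}\Phi(\bm{y},p)$ is exactly $\ell(p)$, and the dual problem is \eqref{prob_aux_1}. Each summand of $\ell$ is a concave function of $p_i$ on $\R_+$, so $\ell$ is concave. Since $\Delta_N$ is compact, a maximizer $\overline{p}$ exists.

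\textbf{Identifying the prox.} We must show that $\bm{y}(\overline{p})=\prox_{\lambda f}(\bm{x})$; this is the delicate step. Let $\bm{y}^*=\prox_{\lambda f}(\bm{x})$. Since minimax equality holds and a primal minimizer $\bm{y}^*$ and a dual maximizer $\overline{p}$ both exist, the pair $(\bm{y}^*,\overline{p})$ is a saddle point of $\Phi$. In particular $\bm{y}^*$ minimizes $\Phi(\cdot,\overline{p})$. This function is strongly convex, so its minimizer is unique and equals $\bm{y}(\overline{p})$. Therefore $\bm{y}^*=\bm{y}(\overline{p})$, which is the claimed formula.

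The argument works for any maximizer $\overline{p}$, which matches the phrase ``a solution'' in the statement. The only real care needed is to justify the saddle-point property, namely that minimax equality together with attainment on both sides yields a saddle point. This is standard: combine the chain $\Phi(\bm{y}^*,\overline{p})\le\max_p\Phi(\bm{y}^*,p)=\min\max=\max\min=g(\overline{p})\le\Phi(\bm{y},\overline{p})$, valid for every $\bm{y}$, with the choice $\bm{y}=\bm{y}^*$.
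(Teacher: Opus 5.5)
Your proof is correct and follows the same route as the paper. Both write $f=\max_{p\in\Delta_N}g_p$, identify $\prox_{\lambda f}(\bm{x})$ with $\prox_{\lambda g_{\overline{p}}}(\bm{x})$ for a maximizer $\overline{p}$ of $p\mapsto e_{\lambda}g_p(\bm{x})=\ell(p)$, and then compute the blockwise prox and envelope exactly as you did. The only difference is that the paper gets this identification by citing a general theorem on the proximity operator of a supremum of convex functions, whereas you derive it directly from Sion's minimax theorem and the saddle-point chain, which makes your argument self-contained.
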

\begin{proof}
Note that $f(\bm{x})=\displaystyle\max_{p\in\Delta_{N}}g_{p}(\bm{x})$, where $g_{p}(\bm{x})=\sum_{i=1}^{N}p_{i}\omega_{i}\|x_{i}\|^{2}$ for all $p\in\Delta_{N}$. Clearly, $g_{p}\in\Gamma_{0}(\R^{nN})$ for all $p\in\Delta_{N}$. Moreover, the function $p\mapsto g_{p}(\bm{x})$ is concave and upper semicontinuous for all $\bm{x}\in\R^{nN}$. In addition, $f=\sup_{p\in\Delta_{N}}g_{p}$ is proper. Furthermore, $\Delta_{N}$ is a nonempty compact and convex set.
Then, by \cite[Theorem~3.5]{MR4279933}, we have that 
  \begin{align}
   \label{prox_sup_sep}
\prox_{\lambda f}(\bm{x})=\prox_{\lambda g_{\overline{p}}}(\bm{x}),\quad\text{ with } \overline{p}\in \displaystyle\argmax_{p\in\Delta_{N}}e_{\lambda}g_{p}(\bm{x}).
  \end{align}
  Now, by \cite[Proposition~24.11 \& Proposition~24.8(i)]{MR3616647}, we obtain that
 \begin{align}
 \label{prox_gp_1}
\prox_{\lambda g_{\overline{p}}}(\bm{x})=\left(\frac{1}{2\lambda_{i} \overline{p}_{i}+1}x_{i}\right)_{i=1}^{N}.
 \end{align}
 Let us calculate $e_{\lambda}g_{p}(\bm{x})$. By \eqref{prox_gp_1}, we have that
\begin{align*}
e_{\lambda}g_{p}(\bm{x})&=g_{p}(\prox_{\lambda g_{p}}(\bm{x}))+\frac{1}{2\lambda}\|\bm{x}-\prox_{\lambda g_{p}}(\bm{x})\|^{2}\\
&=\sum_{i=1}^{N}p_{i}\omega_{i}\left\|\frac{x_{i}}{1+2\lambda_{i} p_{i}}\right\|^{2}+\frac{1}{2\lambda}\sum_{i=1}^{N}\left\|\frac{x_{i}}{1+2\lambda_{i} p_{i}}-x_{i}\right\|^{2}\\
&=\sum_{i=1}^{N}\frac{p_{i}\omega_{i}\|x_{i}\|^{2}}{(1+2\lambda_{i} p_{i})^{2}}+\frac{1}{2\lambda}\frac{(2\lambda_{i} p_{i})^{2}\|x_{i}\|^{2}}{(1+2\lambda_{i} p_{i})^{2}}\\
&=\sum_{i=1}^{N}\frac{p_{i}\omega_{i}\|x_{i}\|^{2}+2\lambda\omega_{i}^{2}p_{i}^{2}\|x_{i}\|^{2}}{(1+2\lambda_{i} p_{i})^{2}}\\
&=\sum_{i=1}^{N}\frac{p_{i}\omega_{i}\|x_{i}\|^{2}(1+2\lambda\omega_{i}p_{i})}{(1+2\lambda_{i}p_{i})^{2}}\\
&=\sum_{i=1}^{N}\frac{p_{i}\omega_{i}\|x_{i}\|^{2}}{1+2\lambda_{i}p_{i}}.
\end{align*}
Observe that $p\mapsto \ell(p)=e_{\lambda}g_{p}(\bm{x})$ is concave since it is the infimum of concave functions.
\end{proof}
Note that if $\alpha_{i}=0$ for all $i\in\{1,\ldots,N\}$, then any $p\in\Delta_{N}$ is solution to \eqref{prob_aux_1}. Hence, we assume that $\displaystyle\max_{1\leq i\leq N}\alpha_{i}>0$. The following result provides an explicit solution to problem~\eqref{prob_aux_1}.
\begin{proposition}
\label{p_convex}
In the context of problem \eqref{prob_aux_1}, let $\{\ell_{i}\}_{i=1}^{N}$ such that $\alpha_{\ell_{1}}\leq\cdots\leq\alpha_{\ell_{N}}$ with $\alpha_{\ell_N}>0$ and let $I_{i}:=\{\ell_{1},\ldots,\ell_{i}\}$ for all $i\in\{1,\ldots,N\}$. Define
		\begin{align}
			\label{def_k}
			k:=\min\left\{i\in\{0,\ldots,N-1\}\,:\,(2+\sum_{j\notin I_{i}}1/\lambda_{j})\sqrt{\alpha_{\ell_{i+1}}}>\sum_{j\notin  I_{i}}\dfrac{\sqrt{\alpha_{j}}}{\lambda_{j}}\right\},
		\end{align}
		where ${I_{0}}:=\emptyset$. Then $\overline{p}\in \R^{N}$ defined by
		\begin{align}
			\label{sol_prob_paso_1}
   (\forall i\in\{1,\ldots,N\})\quad
			\overline{p}_{i}=
			\begin{cases}
				0, & \text{if}\,\, i\in {I_{k}};\\
				\dfrac{1}{2\lambda_{i}}\left[\dfrac{(2+\sum_{j\notin I_{k}}1/\lambda_{j})\sqrt{\alpha_{i}}}{\sum_{j\notin { I_{k}}}\frac{\sqrt{\alpha_{j}}}{\lambda_{j}}} -1\right],& \text{if}\,\, i\notin{ I_{k}}
			\end{cases}
		\end{align}
		is a solution to problem \eqref{prob_aux_1}.
	\end{proposition}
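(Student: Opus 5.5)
Since $\ell$ is concave (as observed in the proof of Proposition~\ref{prox_conv_1}) and $\Delta_N$ is a polyhedral set, the KKT conditions are necessary and sufficient for optimality. The plan is therefore to exhibit multipliers for which $\overline{p}$ satisfies them. For each $i$, the map $t\mapsto \alpha_i t/(1+2\lambda_i t)$ has derivative $\alpha_i/(1+2\lambda_i t)^2$. So $\overline{p}$ solves \eqref{prob_aux_1} if and only if $\overline{p}\in\Delta_N$ and there is $\mu\in\R$ with
\begin{align*}
\frac{\alpha_i}{(1+2\lambda_i\overline{p}_i)^2}=\mu \ \text{ if } \overline{p}_i>0,\qquad \alpha_i\le \mu \ \text{ if } \overline{p}_i=0.
\end{align*}
Solving the equality case gives $1+2\lambda_i\overline{p}_i=\sqrt{\alpha_i}/\sqrt{\mu}$. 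Imposing $\sum_{i\notin I_k}\overline{p}_i=1$ yields
\begin{align*}
\sqrt{\mu}=\frac{\sum_{j\notin I_k}\sqrt{\alpha_j}/\lambda_j}{2+\sum_{j\notin I_k}1/\lambda_j},
\end{align*}
and substituting this value reproduces \eqref{sol_prob_paso_1}. It follows that $\sum_i\overline{p}_i=1$ holds automatically, and the stationarity equations hold on the complement of $I_k$ with this $\mu>0$. (Positivity of $\mu$ needs $\alpha_{\ell_N}>0$, which guarantees that the complement of $I_k$ contains an index with $\alpha_j>0$.)

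It then remains to check two inequalities: $\overline{p}_i\ge 0$ for $i\notin I_k$, and $\alpha_i\le\mu$ for $i\in I_k$. Both should come from the minimality in \eqref{def_k}.

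\emph{Nonnegativity.} For $i\notin I_k$ we have $\alpha_i\ge\alpha_{\ell_{k+1}}$ by the ordering. Since $k$ satisfies the defining inequality, $\sqrt{\alpha_{\ell_{k+1}}}>\sqrt{\mu}$. Hence $\sqrt{\alpha_i}/\sqrt{\mu}>1$, so $\overline{p}_i>0$.

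\emph{Complementarity for $i\in I_k$.} If $k=0$ there is nothing to check. Otherwise, since $\alpha_i\le\alpha_{\ell_k}$ for $i\in I_k$, it suffices to show $\sqrt{\alpha_{\ell_k}}\le\sqrt{\mu}$. By minimality of $k$, the index $k-1$ fails the condition:
\begin{align*}
\Bigl(2+\sum_{j\notin I_{k-1}}1/\lambda_j\Bigr)\sqrt{\alpha_{\ell_k}}\le \sum_{j\notin I_{k-1}}\sqrt{\alpha_j}/\lambda_j .
\end{align*}
Write $A=2+\sum_{j\notin I_k}1/\lambda_j$ and $B=\sum_{j\notin I_k}\sqrt{\alpha_j}/\lambda_j$, and note that $I_{k-1}=I_k\setminus\{\ell_k\}$. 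The displayed inequality then reads
\begin{align*}
(A+1/\lambda_{\ell_k})\sqrt{\alpha_{\ell_k}}\le B+\sqrt{\alpha_{\ell_k}}/\lambda_{\ell_k}.
\end{align*}
Cancelling the common term gives $A\sqrt{\alpha_{\ell_k}}\le B$, that is, $\sqrt{\alpha_{\ell_k}}\le\sqrt{\mu}$, as required.

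I expect the main obstacle to be well-posedness: showing that the set in \eqref{def_k} is nonempty, so that $k$ exists. To handle this, test $i=N-1$, where the complement of $I_{N-1}$ is the single index $\ell_N$. The condition becomes $(2+1/\lambda_{\ell_N})\sqrt{\alpha_{\ell_N}}>\sqrt{\alpha_{\ell_N}}/\lambda_{\ell_N}$, which holds because $\alpha_{\ell_N}>0$. A secondary point is ties among the $\alpha$'s, which the argument above handles because it uses only weak inequalities in the ordering.

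With stationarity, nonnegativity and complementarity in place, concavity of $\ell$ gives that $\overline{p}$ is a maximizer of \eqref{prob_aux_1}.
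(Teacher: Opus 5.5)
Your proposal is correct and follows essentially the same route as the paper. Both arguments use the KKT conditions for the concave problem over $\Delta_N$ with the common multiplier $\mu=\bigl(\sum_{j\notin I_k}\sqrt{\alpha_j}/\lambda_j\bigr)^2/\bigl(2+\sum_{j\notin I_k}1/\lambda_j\bigr)^2$ (the paper's $\tau$). Both get nonnegativity from $k$ belonging to the set in \eqref{def_k}, get complementarity from $k-1$ failing it after cancelling the $\sqrt{\alpha_{\ell_k}}/\lambda_{\ell_k}$ term, and get well-posedness from the index $N-1$. The differences are cosmetic: you derive $\mu$ from the normalization constraint instead of positing it, and you explicitly check that $\sum_{j\notin I_k}\sqrt{\alpha_j}/\lambda_j>0$ because $\ell_N\notin I_k$, which the paper leaves implicit.
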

\begin{proof}
First of all, the set in \eqref{def_k} is nonempty since $N-1$ is in that set. Indeed, $(2+1/\lambda_{\ell_{N}})\sqrt{\alpha_{\ell_{N}}}>\sqrt{\alpha_{\ell_{N}}}/\lambda_{\ell_{N}}$. Hence $k$ is well defined. Note that $\nabla \ell(p)=\left(\dfrac{\alpha_{i}}{(1+2\lambda_{i} p_{i})^{2}}\right)_{i=1}^{N}$ for all $p\in\R^{N}$. Then, since $-\ell$ is convex on $\Delta_{N}$, by the definition of $\Delta_{N}$ and the KKT's conditions, it follows that it is enough to prove that there exists $\tau\in \R$ and $(\mu_{i})_{i=1}^{N}\in\R_{+}^{N}$ such that
		\begin{align}
			\label{cond_kkt}
			&\dfrac{-\alpha_{i}}{(1+2\lambda_{i} p_{i})^{2}} +\tau-\mu_{i}=0,\quad\mu_{i}p_{i}=0,\quad p_{i}\geq 0\quad\text{for all }i\in\{1,\ldots,N\},\\
			&\text{and }\sum_{i=1}^{N}p_{i}=1,\label{cond_kkt_2}
		\end{align}
		where $p\in\R^{N}$ is defined by \eqref{sol_prob_paso_1}. Consider
		\begin{align*}
			\tau=\left(\sum_{j\notin{ I_{k}}}\frac{\sqrt{\alpha_{j}}}{\lambda_{j}}\right)^{2}/\left(2+\sum_{j\notin I_{k}}1/\lambda_{j}\right)^{2}\in\R
		\end{align*}
		and $(\mu_{i})_{i=1}^{N}\in \R^{N}$ defined by $\mu_{i}=\tau-\alpha_{i}$ if $i\in { I_{k}}$ and $\mu_{i}=0$ if $i\notin { I_{k}}$. Thus, we have the second condition in \eqref{cond_kkt}. Let  {us} now prove the first condition in \eqref{cond_kkt}. Let $i\in\{1,\ldots,N\}$. If $i\in { I_{k}}$, then $p_{i}=0$ and $\dfrac{-\alpha_{i}}{(1+2\lambda_{i} p_{i})^{2}}+\tau-\mu_{i}=-\alpha_{i}+\tau-\mu_{i}=0$. If $i\notin { I_{k}}$, then by \eqref{sol_prob_paso_1}, we have that $(1+2\lambda_{i} p_{i})^{2}=\left(2+\sum_{j\notin I_{k}}1/\lambda_{j}\right)^{2}\alpha_{i}/\left(\sum_{j\notin I_{k}}\sqrt{\alpha_{j}}/\lambda_{j}\right)^{2}$ and hence
		\begin{align*}
			\frac{-\alpha_{i}}{(1+2\lambda_{i} p_{i})^{2}}+\tau-\mu_{i}=-\frac{\left(\sum_{j\notin { I_{k}}}\sqrt{\alpha_{j}}/\lambda_{j}\right)^{2}}{(2+\sum_{j\notin I_{k}}1/\lambda_{j})^{2}}+\tau-\mu_{i}=-\mu_{i}=0,
		\end{align*}
		which prove the first condition in \eqref{cond_kkt}.
		We claim that $\mu_{i}\geq 0$ for all $i\in  {I_{k}}$ (note that if $k=0$, the latter is direct since $ {I_{0}}=\emptyset$, so in order to prove this claim we assume that $k>0$). Let $i\in { I_{k}}$. Then $\alpha_{i}\leq \alpha_{\ell_{k}}$. Now, by definition of $k$, we have that $k-1$ is not in the set in \eqref{def_k}, that is,
		\begin{align*}
			(2+\sum_{j\notin I_{k-1}}1/\lambda_{j})\sqrt{\alpha_{\ell_{k}}}\leq \sum_{j\notin { I_{k-1}}}\sqrt{\alpha_{j}}/\lambda_{j} = \frac{\sqrt{\alpha_{\ell_{k}}}}{\lambda_{\ell_{k}}}+\cdots+\frac{\sqrt{\alpha_{\ell_{N}}}}{\lambda_{\ell_{N}}}.
		\end{align*}
		Therefore, the term $\frac{\sqrt{\alpha_{\ell_{k}}}}{\lambda_{\ell_{k}}}$ is canceled and it follows that 
\begin{align*}
(2+\sum_{j\notin I_{k}}1/\lambda_{j})\sqrt{\alpha_{i}}\leq (2+\sum_{j\notin I_{k}}1/\lambda_{j})\sqrt{\alpha_{\ell_{k}}}\leq \sum_{j\notin I_{k}}\sqrt{\alpha_{j}}/\lambda_{j},
\end{align*}
		which implies that $\mu_{i}=\tau-\alpha_{i}\geq 0$. We claim now that $p_{i}\geq 0$ for all $i\notin { I_{k}}$. Let $i\notin { I_{k}}$. Then $\alpha_{i}\geq \alpha_{\ell_{k+1}}$ and therefore
		\begin{align*}
			\dfrac{(2+\sum_{j\notin I_{k}}1/\lambda_{j})\sqrt{\alpha_{i}}}{\sum_{j\notin { I_{k}}}\sqrt{\alpha_{j}}/\lambda_{j}}\geq \dfrac{(2+\sum_{j\notin I_{k}}1/\lambda_{j})\sqrt{\alpha_{\ell_{k+1}}}}{\sum_{j\notin { I_{k}}}\sqrt{\alpha_{j}}/\lambda_{j}}>1,
		\end{align*}
		where the last inequality is by the definition of $k$. Thus, by definition in \eqref{sol_prob_paso_1}, we obtain that $p_{i}\geq 0$. Finally, we deduce from definition in \eqref{sol_prob_paso_1}, that 
        \begin{align*}
        \sum_{i=1}^{N}p_{i}=\dfrac{1}{2}\left(2+\sum_{j\notin I_{k}}\frac{1}{\lambda_{j}}-\sum_{i\notin I_{k}}\frac{1}{\lambda_{i}}\right)=1.
        \end{align*} 
        In summary, we have proved \eqref{cond_kkt}-\eqref{cond_kkt_2}.
\end{proof}
\subsection{Weakly convex case}
In this section, we derive a closed-form for the proximity operator of the following function
\begin{align}
\label{fun_nonconv_1}
(\forall \bm{x}\in\R^{nN})\quad f(\bm{x})=\max_{1\leq i\leq N}-\omega_{i}\|x_{i}\|^{2}.
\end{align}
Before we state the main result, we need some basic lemmas.
\begin{lemma}
\label{lema_1}
Let $I=\{1,\ldots,N\}$. For every $i\in I$, let $\rho_{i}>0$ and $f_{i}\in\Gamma_{\rho_{i}}(\R^{n})$. Let $\rho=\displaystyle\max_{i\in I}\rho_{i}$ and $f\colon\R^{nN}\rightarrow \R\cup\{+\infty\}$ defined by $f(\bm{x})=\sum_{i\in I}f_{i}(x_{i})$, for all $\bm{x}\in\R^{nN}$. Then $f\in\Gamma_{\rho}(\R^{nN})$ and, for every $\bm{x}\in\R^{nN}$ and $\lambda<1/\rho$, we have
\begin{align*}
\prox_{\lambda f}(\bm{x})=(\prox_{\lambda f_{i}}(x_{i}))_{i\in I}.
\end{align*}
\end{lemma}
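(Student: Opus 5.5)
The plan is to split the claim into two parts: first show that $f$ is $\rho$-weakly convex, then reduce the computation of the proximity operator to the separable convex case.

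\textbf{Weak convexity.} Since $\rho_i\leq\rho$ for every $i\in I$, the function $f_i+\frac{\rho}{2}\|\cdot\|^{2}=\bigl(f_i+\frac{\rho_i}{2}\|\cdot\|^{2}\bigr)+\frac{\rho-\rho_i}{2}\|\cdot\|^{2}$ is the sum of a function in $\Gamma_0(\R^{n})$ and a finite convex quadratic. Hence it belongs to $\Gamma_0(\R^{n})$. Next, $\|\bm{x}\|^{2}=\sum_{i\in I}\|x_i\|^{2}$, so
\begin{align*}
f(\bm{x})+\frac{\rho}{2}\|\bm{x}\|^{2}=\sum_{i\in I}\Bigl(f_i(x_i)+\frac{\rho}{2}\|x_i\|^{2}\Bigr).
\end{align*}
This is a separable sum of convex lower semicontinuous functions, each depending on a distinct block. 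It is therefore convex and lower semicontinuous. It is proper because each summand is proper and the blocks are independent: picking $x_i\in\operatorname{dom}f_i$ for each $i$ gives a point of the domain, and no summand takes the value $-\infty$. Thus $f\in\Gamma_\rho(\R^{nN})$.

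\textbf{Proximity operator.} Let $\lambda<1/\rho$, so that $\lambda<1/\rho_i$ for all $i$ and every prox involved is well defined and single-valued. By definition,
\begin{align*}
\prox_{\lambda f}(\bm{x})=\argmin_{\bm{y}\in\R^{nN}}\sum_{i\in I}\Bigl(f_i(y_i)+\frac{1}{2\lambda}\|y_i-x_i\|^{2}\Bigr).
\end{align*}
The objective is a sum of functions of distinct blocks $y_i$. Each summand $f_i(y_i)+\frac{1}{2\lambda}\|y_i-x_i\|^{2}$ is strongly convex, since $f_i+\frac{\rho_i}{2}\|\cdot\|^{2}$ is convex and $\frac{1}{\lambda}-\rho_i>0$. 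Consequently each summand has a unique minimizer, namely $\prox_{\lambda f_i}(x_i)$.

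It remains to check that minimizing the sum amounts to minimizing each block separately. For any $\bm{y}$, each summand evaluated at $y_i$ is at least its value at $\prox_{\lambda f_i}(x_i)$, and equality in the sum forces equality in every term. This holds because all minimum values are finite, so no $\infty-\infty$ issue arises. Hence the unique minimizer of the full objective is $(\prox_{\lambda f_i}(x_i))_{i\in I}$.

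The only delicate point is the properness and finiteness bookkeeping needed to justify the blockwise argmin; everything else is direct.
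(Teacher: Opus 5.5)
Your proof is correct. The weak-convexity part matches the paper's: the splitting $f_i+\frac{\rho}{2}\|\cdot\|^2=\bigl(f_i+\frac{\rho_i}{2}\|\cdot\|^2\bigr)+\frac{\rho-\rho_i}{2}\|\cdot\|^2$, followed by block separability of $f+\frac{\rho}{2}\|\cdot\|^2$. You also check properness explicitly, which the paper leaves implicit.

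For the proximity formula you take a different route. The paper works with subdifferentials. It applies the subdifferential rule for separable sums to $\varphi=f+\frac{\rho}{2}\|\cdot\|^2$ and concludes that $\partial f(\bm{x})$ is the Cartesian product of the sets $\partial f_i(x_i)$. It then uses the characterization $\bm{y}=\prox_{\lambda f}(\bm{x})\Leftrightarrow \bm{x}\in\bm{y}+\lambda\partial f(\bm{y})$, valid for $\lambda<1/\rho$, and splits that inclusion block by block.

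You instead work directly from the variational definition. The objective $\sum_{i}\bigl(f_i(y_i)+\frac{1}{2\lambda}\|y_i-x_i\|^2\bigr)$ is separable. Each block is proper, lower semicontinuous and strongly convex with modulus $\frac{1}{\lambda}-\rho_i>0$, so it has a unique minimizer and a finite minimum value. Hence the argmin of the sum is exactly the tuple of blockwise argmins.

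Your argument is more elementary and self-contained. It needs neither the subdifferential calculus for separable sums nor the optimality characterization of the prox of a weakly convex function; that characterization itself rests on a convex sum rule. It also proves existence and uniqueness of $\prox_{\lambda f}(\bm{x})$ along the way, rather than importing well-definedness from the general theory. The paper's argument is shorter once those tools are granted, and it matches the subdifferential style used again in Lemma~\ref{lema_2}.
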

\begin{proof}
Note that for every $i\in I$, we have
\begin{align*}
f_{i}+\dfrac{\rho}{2}\|\cdot\|^{2}=f_{i}+\dfrac{\rho_{i}}{2}\|\cdot\|^{2}+\dfrac{1}{2}(\rho-\rho_{i})\|\cdot\|^{2}.
\end{align*}
Then, $f_{i}\in \Gamma_{\rho}(\R^{n})$, for all $i\in I$. For every $i\in I$, let $\varphi_{i}=f_{i}+\dfrac{\rho}{2}\|\cdot\|^{2}\in\Gamma_{0}(\R^{n})$. Then
\begin{align*}
(\forall \bm{x}\in\R^{nN})\quad f(\bm{x})+\dfrac{\rho}{2}\|\bm{x}\|^{2}=\sum_{i\in I}\left(f_{i}(x_{i})+\dfrac{\rho}{2}\|x_{i}\|^{2}\right)=\sum_{i\in I}\varphi_{i}(x_{i})=:\varphi(\bm{x})
\end{align*}
Since $\varphi\in\Gamma_{0}(\R^{nN})$, then $f\in \Gamma_{\rho}(\R^{nN})$. Now, by \cite[Proposition~16.9]{MR3616647}, for every $\bm{x}\in\R^{nN}$ we have
\begin{align*}
\partial f(\bm{x})=\partial\varphi(\bm{x})-\rho\bm{x}=\bigtimes_{i\in I}(\partial\varphi_{i}(x_{i})-\rho x_{i})=\bigtimes_{i\in I}\partial f_{i}(x_{i}).
\end{align*}
Therefore, if $\lambda<1/\rho$, then
\begin{align*}
\bm{y}=\prox_{\lambda f}(\bm{x})&\Leftrightarrow \bm{x}\in \bm{y}+\lambda\partial f(\bm{y})\\
&\Leftrightarrow (\forall i\in I)\,\, x_{i}\in y_{i}+\lambda \partial f_{i}(x_{i})\\
&\Leftrightarrow (\forall i\in I)\,\, y_{i}=\prox_{\lambda f_{i}}(x_{i})\\
&\Leftrightarrow \bm{y}=(\prox_{\lambda f_{i}}(x_{i}))_{i\in I}.
\end{align*}
\end{proof}
\begin{lemma}
\label{lema_2}
Let $f\in\Gamma_{\rho}(\R^{n})$ and let $b\in\R^{n}$. Consider the function $g\colon\R^{n}\rightarrow\R\cup\{+\infty\}$ given by $g(x)=f(x-b)$, for all $x\in\R^{n}$. Then $g\in\Gamma_{\rho}(\R^{n})$ and for every $\lambda<1/\rho$, we have
\begin{align*}
(\forall x\in\R^{n})\quad\prox_{\lambda g}(x)=b+\prox_{\lambda f}(x-b).
\end{align*}
\end{lemma}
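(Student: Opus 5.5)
The proof has two parts: first that $g$ is $\rho$-weakly convex, then the translation formula for the proximity operator.

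\textbf{Weak convexity.} Set $\varphi:=f+\frac{\rho}{2}\|\cdot\|^{2}\in\Gamma_{0}(\R^{n})$. Expanding the square gives, for every $x\in\R^{n}$,
\begin{align*}
g(x)+\frac{\rho}{2}\|x\|^{2}=\varphi(x-b)-\frac{\rho}{2}\|x-b\|^{2}+\frac{\rho}{2}\|x\|^{2}=\varphi(x-b)+\rho\langle x,b\rangle-\frac{\rho}{2}\|b\|^{2}.
\end{align*}
Here $x\mapsto\varphi(x-b)$ is a translate of a function in $\Gamma_{0}(\R^{n})$, so it is convex, lower semicontinuous and proper. The remaining terms form an affine function. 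Adding an affine function preserves membership in $\Gamma_{0}(\R^{n})$, hence $g\in\Gamma_{\rho}(\R^{n})$.

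\textbf{Prox formula.} I would mirror the argument of Lemma~\ref{lema_1}, working with the subdifferential. By the definition of $\partial$ for weakly convex functions and the computation above,
\begin{align*}
\partial g(x)=\partial\varphi(x-b)+\rho b-\rho x=\partial\varphi(x-b)-\rho(x-b)=\partial f(x-b).
\end{align*}
This uses only that the convex subdifferential of a translate is the translate of the subdifferential, and that an affine term shifts the subdifferential by its gradient $\rho b$.

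Now take $\lambda<1/\rho$. The proximity operator of $g$ is well defined and single valued, and $y=\prox_{\lambda g}(x)$ is characterized by $x\in y+\lambda\partial g(y)$, exactly as in Lemma~\ref{lema_1}. Substituting the identity for $\partial g$, this reads $x-b\in(y-b)+\lambda\partial f(y-b)$. That inclusion is equivalent to $y-b=\prox_{\lambda f}(x-b)$, which is the claim.

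\textbf{Alternative route.} One could avoid subdifferentials and substitute $z=y-b$ directly in $\min_{y}\{f(y-b)+\frac{1}{2\lambda}\|y-x\|^{2}\}$. This turns it into $\min_{z}\{f(z)+\frac{1}{2\lambda}\|z-(x-b)\|^{2}\}$. Uniqueness of the minimizer for $\lambda<1/\rho$ then gives the result, and this is perhaps cleaner.

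\textbf{Main obstacle.} The only delicate point is justifying the characterization $y=\prox_{\lambda g}(x)\Leftrightarrow x\in y+\lambda\partial g(y)$ in the weakly convex setting. It holds because, for $\lambda<1/\rho$, the prox objective is strongly convex and its minimizer is determined by the convex optimality condition. The change-of-variables route sidesteps this issue entirely.
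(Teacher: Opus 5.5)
Your proof is correct and follows the paper's argument essentially verbatim. It uses the same expansion showing that $g+\frac{\rho}{2}\|\cdot\|^{2}$ is a translate of $f+\frac{\rho}{2}\|\cdot\|^{2}$ plus an affine term, the same identity $\partial g(x)=\partial f(x-b)$, and the same chain of inclusions characterizing $\prox_{\lambda g}(x)$. Your justification of $y=\prox_{\lambda g}(x)\Leftrightarrow x\in y+\lambda\partial g(y)$, via the $(1/\lambda-\rho)$-strong convexity of the prox objective, supplies a step the paper takes for granted, and your change-of-variables alternative is an even shorter valid proof of the prox formula.
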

\begin{proof}
Let $x\in\R^{n}$. Then
\begin{align}
\label{w_fun_g}
g(x)+\dfrac{\rho}{2}\|x\|^{2}=f(x-b)+\dfrac{\rho}{2}\|x-b\|^{2}+\rho\langle x,b\rangle-\dfrac{\rho}{2}\|b\|^{2}.
\end{align}
Since $f\in\Gamma_{\rho}(\R^{n})$, then  $x\mapsto f(x-b)+\dfrac{\rho}{2}\|x-b\|^{2}$ belongs to $\Gamma_{0}(\R^{n})$. Thus, we conclude that $g\in\Gamma_{\rho}(\R^{n})$. Define $\varphi=f+\dfrac{\rho}{2}\|\cdot\|^{2}$ and $\widetilde{\varphi}\colon x\mapsto \varphi(x-b)$. From \eqref{w_fun_g}, for all $x\in\R^{n}$, we have
\begin{align*}
\partial g(x)&=\partial \left(g+\frac{\rho}{2}\|\cdot\|^{2}\right)(x)-\rho x\\
&=\partial \widetilde{\varphi}(x)+\rho b-\rho x\\
&=\partial \varphi(x-b)-\rho(x-b)=\partial f (x-b).
\end{align*}
Therefore, for every $x\in\R^{n}$ and $\lambda<1/\rho$, we have
\begin{align*}
y=\prox_{\lambda g}(x)&\Leftrightarrow x\in y+\lambda\partial g(y)\\
&\Leftrightarrow x\in y+\lambda\partial f(y-b)\\
&\Leftrightarrow x-b\in y-b+\lambda\partial f(y-b)\\
&\Leftrightarrow y-b=\prox_{\lambda f}(x-b)\Leftrightarrow y=b+\prox_{\lambda f}(x-b).
\end{align*}
\end{proof}
The following results provide a closed expression for the proximity operator of the function in \eqref{fun_nonconv_1}.
\begin{proposition}
\label{prox_w_convex}
Let $f$ the function in \eqref{fun_nonconv_1}. Then $f\in\Gamma_{\rho}(\R^{nN})$ with $\rho=2\displaystyle\max_{1\leq i\leq N}{\omega_{i}}$. Let $\bm{x}\in\R^{nN}$ and $\mu<1/\rho$. For every $i\in\{1,\ldots,N\}$, denote $\mu_{i}=\mu\omega_{i}$. Then
\begin{align*}
\prox_{\mu f}(\bm{x})=\left(\dfrac{x_{i}}{1-2\mu_{i}\overline{p}_{i}}\right)_{i=1}^{N},
\end{align*}
where $\overline{p}\in\R^{N}$ is a solution to
\begin{align}
\label{prob_aux_2}
\max_{p\in\Delta_{N}}\phi(p):=\sum_{i=1}^{N}\dfrac{\alpha_{i}p_{i}}{2\mu_{i}p_{i}-1},
\end{align}
and, for all $i\in\{1,\ldots,N\}$, $\alpha_{i}=\omega_{i}\|x_{i}\|^{2}$. 
\end{proposition}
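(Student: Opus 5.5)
We follow the pattern of Proposition~\ref{prox_conv_1}, adapted to the weakly convex setting. First we check that $f\in\Gamma_{\rho}(\R^{nN})$ with $\rho=2\max_i\omega_i$. For each $i$, the function $\bm{x}\mapsto -\omega_i\|x_i\|^2+\frac{\rho}{2}\|\bm{x}\|^2=(\rho/2-\omega_i)\|x_i\|^2+\frac{\rho}{2}\sum_{j\neq i}\|x_j\|^2$ is convex, because $\rho/2\geq\omega_i$. Hence $f+\frac{\rho}{2}\|\cdot\|^2$ is a finite maximum of convex continuous functions and lies in $\Gamma_0(\R^{nN})$.

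Next we write $f=\max_{p\in\Delta_N}g_p$ with $g_p(\bm{x})=-\sum_i p_i\omega_i\|x_i\|^2$. For $\mu<1/\rho$ we have $2\mu\omega_i p_i<1$ for every $p\in\Delta_N$, so the weak convexity constants of the $g_p$ are uniformly controlled. We then reduce to the convex setting. Set $h:=f+\frac{\rho}{2}\|\cdot\|^2=\max_{p}\tilde g_p$ with $\tilde g_p=g_p+\frac{\rho}{2}\|\cdot\|^2\in\Gamma_0$. The standard identity $\prox_{\mu f}(\bm{x})=\prox_{\frac{\mu}{1-\mu\rho}h}\big(\bm{x}/(1-\mu\rho)\big)$ holds, and the analogous identity holds for each $g_p$. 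Applying \cite[Theorem~3.5]{MR4279933} to $h=\sup_p\tilde g_p$ (with $p\mapsto\tilde g_p(\bm{x})$ affine, hence concave and u.s.c., and $\Delta_N$ compact and convex) gives $\prox_{\mu f}(\bm{x})=\prox_{\mu g_{\overline p}}(\bm{x})$ with $\overline p\in\argmax_{p}e_{\gamma}\tilde g_p(\bm{y})$, where $\gamma=\mu/(1-\mu\rho)$ and $\bm{y}=\bm{x}/(1-\mu\rho)$.

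A short computation relates the envelopes: $e_\gamma\tilde g_p(\bm{y})=\frac{1}{1-\mu\rho}e_\mu g_p(\bm{x})+c(\bm{x})$, where $c$ does not depend on $p$. This follows from expanding $\frac{\rho}{2}\|z\|^2+\frac{1}{2\gamma}\|z-\bm{y}\|^2=\frac{1}{2\mu}\|z-\bm{x}\|^2+\text{const}$ after rescaling. Consequently, maximizing $e_\gamma\tilde g_p(\bm{y})$ over $p$ is equivalent to maximizing $e_\mu g_p(\bm{x})$ over $p$.

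It remains to compute $\prox_{\mu g_p}$ and $e_\mu g_p$. By Lemma~\ref{lema_1}, $g_p$ is separable with components $-p_i\omega_i\|\cdot\|^2$, each $2p_i\omega_i$-weakly convex. Solving the strongly convex one-dimensional-type problem $\min_z -p_i\omega_i\|z\|^2+\frac{1}{2\mu}\|z-x_i\|^2$ gives $z=x_i/(1-2\mu_ip_i)$. Substituting as in the proof of Proposition~\ref{prox_conv_1}, the $i$-th term of the envelope is
\begin{align*}
-\frac{p_i\alpha_i}{(1-2\mu_ip_i)^2}+\frac{1}{2\mu}\frac{4\mu_i^2p_i^2\|x_i\|^2}{(1-2\mu_ip_i)^2}=\frac{p_i\alpha_i(2\mu_ip_i-1)}{(1-2\mu_ip_i)^2}=\frac{\alpha_ip_i}{2\mu_ip_i-1}.
\end{align*}
Therefore $e_\mu g_p(\bm{x})=\phi(p)$ and the claimed formula follows.

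The main obstacle is justifying the use of \cite[Theorem~3.5]{MR4279933} in the weakly convex case, either through the shift to $h$ above or through a weakly convex version of that result. This requires checking carefully that the rescaling constant is independent of $p$, so that the argmax is preserved.
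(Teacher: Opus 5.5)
Your proof is correct, but it justifies the key step differently from the paper. The paper obtains $g_p\in\Gamma_\rho(\R^{nN})$ from Lemma~\ref{lema_1} with $\rho_i=2\omega_i$, and deduces $f\in\Gamma_\rho(\R^{nN})$ from \cite[Lemma~5.1]{lopez2026projected}. It then applies the weakly convex sup-prox result \cite[Theorem~5.2]{lopez2026projected} directly, which gives $\prox_{\mu f}(\bm{x})=\prox_{\mu g_{\overline p}}(\bm{x})$ with $\overline p\in\argmax_{p\in\Delta_N}e_\mu g_p(\bm{x})$. Its envelope computation is the same as yours.

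You instead check weak convexity of $f$ directly, as a maximum of the convex functions $-\omega_i\|x_i\|^2+\frac{\rho}{2}\|\bm{x}\|^2$. You then reduce to the convex theorem \cite[Theorem~3.5]{MR4279933} through the shift $h=f+\frac{\rho}{2}\|\cdot\|^2$, with $\gamma=\mu/(1-\mu\rho)$ and $\bm{y}=\bm{x}/(1-\mu\rho)$. In effect you reprove the weakly convex extension for this instance. This makes the argument self-contained modulo the convex result, at the cost of a little bookkeeping. The paper's route is shorter but rests on a second external theorem.

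One small correction: the envelope relation has no multiplicative factor. Expanding gives, for every $z$,
\begin{align*}
g_p(z)+\frac{1}{2\mu}\|z-\bm{x}\|^2=\tilde g_p(z)+\frac{1}{2\gamma}\|z-\bm{y}\|^2-\frac{\rho}{2(1-\mu\rho)}\|\bm{x}\|^2,
\end{align*}
so $e_\gamma\tilde g_p(\bm{y})=e_\mu g_p(\bm{x})+\frac{\rho}{2(1-\mu\rho)}\|\bm{x}\|^2$. The shift does not depend on $p$, and a positive factor would not change the argmax anyway, so your conclusion stands.

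Also, if you invoke Lemma~\ref{lema_1} with constants $2p_i\omega_i$, note that the lemma assumes $\rho_i>0$, which fails when $p_i=0$. Either take $\rho_i=2\omega_i$, as the paper does, or simply minimize the separable strongly convex objective componentwise, as you already do.
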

\begin{proof}
Note that $f(\bm{x})=\displaystyle\max_{p\in\Delta_{N}}g_{p}(\bm{x})$, where $g_{p}(\bm{x})=\sum_{i=1}^{N}-p_{i}\omega_{i}\|x_{i}\|^{2}$ for all $p\in\Delta_{N}$. Given $p\in\Delta_{N}$ and $i\in\{1,\ldots,N\}$, define $f_{i}(z)=-p_{i}\omega_{i}\|z\|^{2}$, for all $z\in\R^{n}$. Clearly, for every $i\in\{1,\ldots,N\}$, $f_{i}\in\Gamma_{\rho_{i}}(\R^{n})$ with $\rho_{i}=2\omega_{i}$. Then, by Lemma~\ref{lema_1}, $g_{p}\in\Gamma_{\rho}(\R^{nN})$ for all $p\in\Delta_{N}$, with $\rho=2\max_{1\leq i\leq N}\omega_{i}$. Thus, by \cite[Lemma~5.1]{lopez2026projected}, $f\in\Gamma_{\rho}(\R^{nN})$. Now, the function $p\mapsto g_{p}(\bm{x})$ is concave and upper semicontinuous for all $\bm{x}\in\R^{nN}$. In addition, $f=\sup_{p\in\Delta_{N}}g_{p}$ is proper. Furthermore, $\Delta_{N}$ is a nonempty compact and convex set.
Then, by \cite[Theorem~5.2]{lopez2026projected}, for $\mu<1/\rho$, we have 
  \begin{align}
   \label{prox_sup_sep_2}
\prox_{\mu f}(\bm{x})=\prox_{\mu g_{\overline{p}}}(\bm{x}),\quad\text{ with } \overline{p}\in \displaystyle\argmax_{p\in\Delta_{N}}e_{\mu}g_{p}(\bm{x}).
  \end{align}
  Note that for all $i\in\{1,\ldots,N\}$ and $z\in\R^{n}$, we have
  \begin{align*}
y=\prox_{\mu f_{i}}(z)\Leftrightarrow z=y-2\mu p_{i}\omega_{i}y\Leftrightarrow y=\dfrac{z}{1-2\mu_{i}p_{i}}.
  \end{align*}
  Then, from Lemma~\ref{lema_1}, we have
  \begin{align}
  \label{prox_gp_2}
    \prox_{\mu g_{p}}(\bm{x})=\left(\dfrac{x_{i}}{1-2\mu_{i}p_{i}}\right)_{i=1}^{N}.
  \end{align}
  Let us compute $e_{\mu}g_{p}(\bm{x})$. By \eqref{prox_gp_2}, we have that
\begin{align*}
e_{\lambda}g_{p}(\bm{x})&=g_{p}(\prox_{\lambda g_{p}}(\bm{x}))+\frac{1}{2\lambda}\|\bm{x}-\prox_{\lambda g_{p}}(\bm{x})\|^{2}\\
&=\sum_{i=1}^{N}-p_{i}\omega_{i}\left\|\frac{x_{i}}{1-2\mu_{i} p_{i}}\right\|^{2}+\frac{1}{2\mu}\sum_{i=1}^{N}\left\|\frac{x_{i}}{1-2\mu_{i} p_{i}}-x_{i}\right\|^{2}\\
&=\sum_{i=1}^{N}-\frac{p_{i}\omega_{i}\|x_{i}\|^{2}}{(1-2\mu_{i} p_{i})^{2}}+\frac{1}{2\mu}\frac{(2\mu_{i} p_{i})^{2}\|x_{i}\|^{2}}{(1-2\mu_{i} p_{i})^{2}}\\
&=\sum_{i=1}^{N}\frac{-p_{i}\omega_{i}\|x_{i}\|^{2}+2\mu\omega_{i}^{2}p_{i}^{2}\|x_{i}\|^{2}}{(1-2\mu_{i} p_{i})^{2}}\\
&=\sum_{i=1}^{N}\frac{p_{i}\omega_{i}\|x_{i}\|^{2}(2\mu\omega_{i}p_{i}-1)}{(1-2\mu_{i}p_{i})^{2}}\\
&=\sum_{i=1}^{N}\frac{p_{i}\omega_{i}\|x_{i}\|^{2}}{2\mu_{i}p_{i}-1}.
\end{align*}
\end{proof}
In the context of problem~\eqref{prob_aux_2}, if $\alpha_{i}=0$ for some $i\in\{1,\ldots,N\}$, then a solution of problem~\eqref{prob_aux_2} is $p=e_{i}$, where $e_{i}\in\R^{N}$ is the canonical vector in $\R^{N}$. Then, we can assume that $\alpha_{i}>0$ for all $i\in\{1,\ldots,N\}$. We note that
\begin{align*}
(\nabla^{2}\phi(p))_{ij}=\begin{cases}
4\mu_{i}\alpha_{i}(2\mu_{i} p_{i}-1)^{-3} & \text{ if } i=j\\
0 & \text{ if }i\neq j
\end{cases}\quad\text{for all } i,j\in\{1,\ldots,N\}.
\end{align*}
Then, $\nabla^{2}(-\phi)(p)$ is a positive definite matrix for all $p\in\Delta_{N}$ since $\mu_{i}=\mu\omega_{i}<\omega_{i}/(2\max_{1\leq i\leq N}\omega_{i})\leq 1/2$. Thus $-\phi$ is strictly convex on $\Delta_{N}$. In addition, $\Delta_{N}$ is a compact convex set and $-\phi$ is continuous. Therefore, if $\alpha_{i}>0$ for all $i\in\{1,\ldots,N\}$, the problem~\eqref{prob_aux_2} has an unique solution.

The following result provides an explicit solution for the problem~\eqref{prob_aux_2}.
\begin{proposition}
		\label{prop_prob_aux_w}
		In the context of problem \eqref{prob_aux_2}, let $\{\ell_{i}\}_{i=1}^{N}$ such that $\alpha_{\ell_{1}}\geq\cdots\geq\alpha_{\ell_{N}}$ with $\alpha_{i}>0$ for all $i\in\{1,\ldots,N\}$ and let $I_{i}:=\{\ell_{1},\ldots,\ell_{i}\}$ for all $i\in\{1,\ldots,N\}$. Define
		\begin{align}
			\label{def_k_w}
			k:=\min\left\{i\in\{0,\ldots,N-1\}\,:\,(\sum_{j\notin I_{i}}1/\mu_{j}-2)\sqrt{\alpha_{\ell_{i+1}}}<\sum_{j\notin I_{i}}\frac{\sqrt{\alpha_{j}}}{\mu_{j}}\right\},
		\end{align}
		where $I_{0}:=\emptyset$. Then $\overline{p}\in \R^{N}$ defined by
		\begin{align}
			\label{sol_prob_aux_w}
			\overline{p}_{i}=
			\begin{cases}
				0 & \text{if}\,\, i\in I_{k}\\
				\dfrac{1}{2\mu_{i}}\left[1-\dfrac{(\sum_{j\notin I_{k}}1/\mu_{j}-2)\sqrt{\alpha_{i}}}{\sum_{j\notin I_{k}}\frac{\sqrt{\alpha_{j}}}{\mu_{j}}}\right]& \text{if}\,\, i\notin I_{k}
			\end{cases}\quad\text{ for all } i\in\{1,\ldots,N\}
		\end{align}
		is the solution to problem \eqref{prob_aux_2}.
	\end{proposition}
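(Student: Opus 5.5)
We mirror the proof of Proposition~\ref{p_convex} and verify the KKT conditions for the concave maximization problem \eqref{prob_aux_2}. By the discussion preceding the statement, $-\phi$ is strictly convex on $\Delta_{N}$ when $\alpha_{i}>0$ for all $i$. The constraints are affine, so the KKT conditions are sufficient, and uniqueness then gives that $\overline{p}$ is \emph{the} solution. Since $\nabla\phi(p)=\bigl(-\alpha_{i}/(2\mu_{i}p_{i}-1)^{2}\bigr)_{i=1}^{N}$, it suffices to find $\tau\in\R$ and $(\nu_{i})_{i=1}^{N}\in\R_{+}^{N}$ such that, for all $i$,
\begin{align*}
\frac{\alpha_{i}}{(1-2\mu_{i}\overline{p}_{i})^{2}}+\tau-\nu_{i}=0,\quad \nu_{i}\overline{p}_{i}=0,\quad \overline{p}_{i}\geq 0,
\end{align*}
together with $\sum_{i}\overline{p}_{i}=1$.

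First we check that $k$ is well defined: $N-1$ belongs to the set in \eqref{def_k_w}, because $(1/\mu_{\ell_{N}}-2)\sqrt{\alpha_{\ell_{N}}}<\sqrt{\alpha_{\ell_{N}}}/\mu_{\ell_{N}}$. Write $S:=\sum_{j\notin I_{k}}1/\mu_{j}$ and $T:=\sum_{j\notin I_{k}}\sqrt{\alpha_{j}}/\mu_{j}>0$. For $i\notin I_{k}$ we have $1-2\mu_{i}\overline{p}_{i}=(S-2)\sqrt{\alpha_{i}}/T$, so we set
\begin{align*}
\tau:=-\frac{T^{2}}{(S-2)^{2}},\qquad \nu_{i}=0 \ (i\notin I_{k}),\qquad \nu_{i}=\alpha_{i}+\tau \ (i\in I_{k}).
\end{align*}
The stationarity and complementarity conditions then hold by construction. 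Here we need $S>2$, which follows because each $\mu_{j}<1/2$, so $S>2(N-k)\geq 2$. The sum condition is the same telescoping computation as before:
\begin{align*}
\sum_{i\notin I_{k}}\overline{p}_{i}=\tfrac12\bigl(S-(S-2)\bigr)=1.
\end{align*}

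It remains to establish the two sign conditions, which we expect to be the main obstacle because of the reversed ordering and the minus signs.

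For $\overline{p}_{i}\geq 0$ with $i\notin I_{k}$: we have $\alpha_{i}\leq\alpha_{\ell_{k+1}}$ since the ordering is decreasing. With $S-2>0$, the defining inequality of $k$ gives $(S-2)\sqrt{\alpha_{i}}\leq(S-2)\sqrt{\alpha_{\ell_{k+1}}}<T$, so the bracket in \eqref{sol_prob_aux_w} is positive.

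For $\nu_{i}\geq 0$ with $i\in I_{k}$ and $k>0$: we need $\sqrt{\alpha_{i}}(S-2)\geq T$. Since $\alpha_{i}\geq\alpha_{\ell_{k}}$, it suffices to prove $(S-2)\sqrt{\alpha_{\ell_{k}}}\geq T$. Minimality of $k$ says that $k-1$ is not in the set, that is,
\begin{align*}
\Bigl(S+\frac{1}{\mu_{\ell_{k}}}-2\Bigr)\sqrt{\alpha_{\ell_{k}}}\geq T+\frac{\sqrt{\alpha_{\ell_{k}}}}{\mu_{\ell_{k}}}.
\end{align*}
Cancelling the term $\sqrt{\alpha_{\ell_{k}}}/\mu_{\ell_{k}}$ from both sides gives exactly $(S-2)\sqrt{\alpha_{\ell_{k}}}\geq T$, as required. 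Together with the previous steps this completes the KKT verification.
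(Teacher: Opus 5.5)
Your proof is correct and follows the paper's argument essentially step by step. You use the same multiplier $\tau=-T^{2}/(S-2)^{2}$, the same $\nu_{i}=\alpha_{i}+\tau$ on $I_{k}$, the same minimality-of-$k$ step with cancellation of $\sqrt{\alpha_{\ell_{k}}}/\mu_{\ell_{k}}$, and the same telescoping sum. One improvement over the paper: you state explicitly that $S>2$ (because each $\mu_{j}<1/2$); the paper relies on this tacitly when it multiplies inequalities by $S-2$ and divides by it.
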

    \begin{proof}
Note that the set in \eqref{def_k_w} is nonempty since $N-1$ is in that set. Indeed, $(1/\mu_{\ell_{N}}-2)\sqrt{\alpha_{\ell_{N}}}<\sqrt{\alpha_{\ell_{N}}}/\mu_{\ell_{N}}$. Thus $k$ is well defined. Note that $\nabla \phi(p)=\left(\dfrac{-\alpha_{i}}{(2\mu_{i} p_{i}-1)^{2}}\right)_{i=1}^{N}$ for all $p\in\R^{N}$. Then, since $-\phi$ is convex on $\Delta_{N}$, by the definition of $\Delta_{N}$ and the KKT's conditions, it follows that it is enough to prove that there exists $\tau\in \R$ and $(\eta_{i})_{i=1}^{N}\in\R_{+}^{N}$ such that
		\begin{align}
			\label{cond_kkt_w}
			&\dfrac{\alpha_{i}}{(2\mu_{i} p_{i}-1)^{2}} +\tau-\eta_{i}=0 \quad\wedge\quad\eta_{i}p_{i}=0\quad\wedge\quad p_{i}\geq 0\quad\text{for all }i\in\{1,\ldots,N\},\\
			&\sum_{i=1}^{N}p_{i}=1,\label{cond_kkt_2_w}
		\end{align}
		where $p\in\R^{N}$ is defined by \eqref{sol_prob_aux_w}. Consider
		\begin{align*}
			\tau=-\left(\sum_{j\notin I_{k}}\frac{\sqrt{\alpha_{j}}}{\mu_{j}}\right)^{2}/\left(\sum_{j\notin I_{k}}1/\mu_{j} -2\right)^{2}\in\R
		\end{align*}
		and $(\eta_{i})_{i=1}^{N}\in \R^{N}$ defined by $\eta_{i}=\tau+\alpha_{i}$ if $i\in I_{k}$ and $\eta_{i}=0$ if $i\notin I_{k}$. Thus, we have the second condition in \eqref{cond_kkt_w}. Let us now prove the first condition in \eqref{cond_kkt_w}. Let $i\in\{1,\ldots,N\}$. If $i\in I_{k}$, then $p_{i}=0$ and $\dfrac{\alpha_{i}}{(2\mu_{i} p_{i}-1)^{2}}+\tau-\eta_{i}=\alpha_{i}+\tau-\eta_{i}=0$. If $i\notin I_{k}$, then by \eqref{sol_prob_aux_w}, we have that $(1-2\mu_{i} p_{i})^{2}=\left(\sum_{j\notin I_{k}}1/\mu_{j}-2\right)^{2}\alpha_{i}/\left(\sum_{j\notin I_{k}}\sqrt{\alpha_{j}}/\mu_{j}\right)^{2}$ and hence
		\begin{align*}
			\frac{\alpha_{i}}{(1-2\mu_{i} p_{i})^{2}}+\tau-\eta_{i}=\frac{\left(\sum_{j\notin I_{k}}\sqrt{\alpha_{j}}/\mu_{j}\right)^{2}}{(\sum_{j\notin I_{k}}1/\mu_{j}-2)^{2}}+\tau-\eta_{i}=-\eta_{i}=0,
		\end{align*}
		which prove the first condition in \eqref{cond_kkt_w}.
		We claim that $\eta_{i}\geq 0$ for all $i\in I_{k}$ (note that if $k=0$, the latter is direct since $I_{0}=\emptyset$, so in order to prove this claim we assume that $k>0$). Let $i\in I_{k}$. Then $\alpha_{i}\geq \alpha_{\ell_{k}}$. Now, by definition of $k$, we have that $k-1$ is not in the set in \eqref{def_k_w}, that is,
		\begin{align*}
			(\sum_{j\notin I_{k-1}}1/\mu_{j}-2)\sqrt{\alpha_{\ell_{k}}}\geq \sum_{j\notin I_{k-1}}\frac{\sqrt{\alpha_{j}}}{\mu_{j}} = \frac{\sqrt{\alpha_{\ell_{k}}}}{\mu_{\ell_{k}}}+\cdots+\frac{\sqrt{\alpha_{\ell_{N}}}}{\mu_{\ell_{N}}},
		\end{align*}
		Hence the term $\frac{\sqrt{\alpha_{\ell_{k}}}}{\mu_{\ell_{k}}}$ is canceled, which yields that
		\begin{align*}
			(\sum_{j\notin I_{k}}1/\mu_{j}-2)\sqrt{\alpha_{i}}\geq (\sum_{j\notin I_{k}}1/\mu_{j}-2)\sqrt{\alpha_{\ell_{k}}}\geq\sum_{j\notin I_{k}}\frac{\sqrt{\alpha_{j}}}{\mu_{j}},
		\end{align*}
		which implies that $\eta_{i}=\tau+\alpha_{i}\geq 0$. We claim now that $p_{i}\geq 0$ for all $i\notin I_{k}$. Let $i\notin I_{k}$. Then $\alpha_{i}\leq \alpha_{\ell_{k+1}}$ and therefore
		\begin{align*}
			\dfrac{(\sum_{j\notin I_{k}}1/\mu_{j} -2)\sqrt{\alpha_{i}}}{\sum_{j\notin I_{k}}\sqrt{\alpha_{j}}/\mu_{j}}\leq \dfrac{(\sum_{j\notin I_{k}}1/\mu_{j} -2)\sqrt{\alpha_{\ell_{k+1}}}}{\sum_{j\notin I_{k}}\sqrt{\alpha_{j}}/\mu_{j}}<1,
		\end{align*}
		where the last inequality is by the definition of $k$. Thus, by definition in \eqref{sol_prob_aux_w}, we obtain that $p_{i}\geq 0$. Finally, we deduce from definition in \eqref{sol_prob_aux_w}, that
        \begin{align*}
        \sum_{i=1}^{N}p_{i}=\dfrac{1}{2}\left(\sum_{i\notin I_{k}}\frac{1}{\mu_{i}}-\left(\sum_{j\notin I_{k}}\frac{1}{\mu_{j}}-2\right)\right)=1.
        \end{align*}
    Thus, we have proved \eqref{cond_kkt_w}-\eqref{cond_kkt_2_w}.
 \end{proof}

 \section{Applications}
\subsection{Denoising}
  Consider a known noisy measurement of a signal $x\in\R^{n}$:
\begin{align*}
b=x+w,
\end{align*}
where $x$ is an unknown signal and $w$ is an unknown noise vector. Given the vector $b$, the objective of the denoising problem is to find a good estimate of $x$. The denoisgin problem has associated the following least squares optimization problem
\begin{align}
\label{den_reg_1}
\min_{x\in\R^{n}} \|x-b\|^{2}+\lambda R(x),
\end{align}
where $R(x)$ is a regularization function which represents some a priori information on the signal $x$ and $\lambda>0$ is a regularization parameter. Usually, the regularization function has the following form
\begin{align*}
R(x)=\sum_{i=1}^{n-1}(x_{i}-x_{i+1})^{2}.
\end{align*}
Note that $R(x)=\|Lx\|^{2}$ for some $L\in\R^{(n-1)\times n}$. In this case, by the Fermat's rule, we obtain that the solution to \eqref{den_reg_1} is given by
\begin{align*}
\overline{x}=(\Id+\lambda L^{\top}L)^{-1}b.
\end{align*}
Consider now $N$ known noisy measurement $b^{1},\ldots,b^{N}$:
\begin{align*}
b^{i}=x^{i}+u^{i},
\end{align*}
where, for every $i\in\{1,\ldots,N\}$, $x^{i}\in\R^{n}$ is the unknown signal and $w^{i}$ is the unknown noise vector. We consider the following problem of finding a good estimate of $x^{i}$ for all $i\in\{1,\ldots,N\}$ in order to minimize the largest norm of the noise vectors with a weight $\omega_{i}>0$ associated to each component $u^{i}$. That is, we consider the following robust optimization problem
\begin{align}
\label{dro_quad_sep}
\min_{\bm{x}=(x^{1},\ldots,x^{N})\in\R^{nN}}\left\{\lambda\sum_{j=1}^{N}\|L_{j}x^{j}\|^{2}+\max_{1\leq i\leq N}\omega_{i}\|x^{i}-b^{i}\|^{2}\right\},
\end{align}
where, for every $j\in\{1,\ldots,N\}$, the quadratic term $\|L_{j}x^{j}\|^{2}$ represents the regularization term of the variable $x^{j}$. The problem~\eqref{dro_quad_sep} is equivalent to
\begin{align*}
\min_{\bm{x}\in\R^{nN}} H(\bm{x})+f(\bm{x}),
\end{align*}
where $H(\bm{x})=\lambda\sum_{j=1}^{N}\|L_{j}x^{j}\|^{2}$ and $f(\bm{x})=\displaystyle\max_{1\leq i\leq N}\omega_{i}\|x^{i}-b^{i}\|^{2}$. Note that 
\begin{align*}
\nabla H(\bm{x})=2\lambda(L_{1}^{\top}L_{1}x^{1},\ldots,L_{N}^{\top}L_{N}x^{N}).
\end{align*}
Thus, for every $\bm{x}$, $\bm{y}\in\R^{nN}$, we have
\begin{align*}
\|\nabla H(\bm{x})-\nabla H(\bm{y})\|^{2}&=\sum_{j=1}
^{N}\|2\lambda L_{j}^{\top}L_{j}(x^{j}-y^{j})\|^{2}\\
&\leq (2\lambda)^{2}\max_{1\leq j\leq N}\|L_{j}^{\top}L_{j}\|^{2}\|\bm{x}-\bm{y}\|^{2}. 
\end{align*}
Hence $\nabla H$ is $\beta^{-1}$-Lipschitz with $\beta^{-1}=2\lambda\displaystyle\max_{1\leq j\leq N}\|L_{j}^{\top}L_{j}\|$. On the other hand, $f(\bm{x})=g(\bm{x}-\bm{b})$, where $g(\bm{x})=\displaystyle\max_{1\leq i\leq N}\omega_{i}\|x^{i}\|^{2}$ and $\bm{b}=(b^{1},\ldots,b^{N})$. That is, $g$ is the function in \eqref{fun_conv_1}. Then, by \cite[Proposition~24.8(ii)]{MR3616647}, we have
\begin{align*}
\prox_{\lambda f}(\bm{x})=\bm{b}+\prox_{\lambda g}(\bm{x}-\bm{b}).
\end{align*}
 The proximity operator of $g$ is given by Propositions~\ref{prox_conv_1} and \ref{p_convex}. Note that $f$ is coercive. Then, problem~\eqref{dro_quad_sep} has solutions. For $\bm{\alpha}=(\alpha_{i})_{i=1}^{N}\in\R_{+}^{N}$ and $\bm{\lambda}=(\lambda_{i})_{i=1}^{N}\in\R_{++}^{N}$, denote by $\overline{p}(\bm{\alpha},\bm{\lambda})$ the solution to \eqref{prob_aux_1}, which has explicit form given by Proposition~\ref{p_convex}. Thus, we can apply the Forward-Backward algorithm \cite[Corollary~28.9]{MR3616647} to obtain the following convergence result.
 \begin{proposition}
    Let $\lambda\in\left]0,2\beta\right[$ and let $\bm{x}^{0}\in\R^{nN}$. For every $n\in\mathbb{N}$, consider
   \begin{equation}
\label{alg_fb_1}
\left\lfloor 
	\begin{array}{ll}
\bm{y}^{n}=\bm{x}^{n}-\lambda\nabla H(\bm{x}^{n})\\
p^{n}=\overline{p}((\omega_{i}\|y_{i}^{n}-b^{i}\|^{2})_{i=1}^{N},(\lambda\omega_{i})_{i=1}^{N})\\
\bm{x}^{n+1}=\bm{b}+\left(\dfrac{y_{i}^{n}-b^{i}}{1+2\lambda\omega_{i}p_{i}^{n}}\right)_{i=1}^{N}.
\end{array}
	\right. 
\end{equation}
Then $\bm{x}^{n}\rightarrow \overline{\bm{x}}$, where $ \overline{\bm{x}}$ is solution to \eqref{dro_quad_sep}.
 \end{proposition}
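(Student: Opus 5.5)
The plan is to recognize \eqref{alg_fb_1} as the forward-backward iteration $\bm{x}^{n+1}=\prox_{\lambda f}(\bm{x}^{n}-\lambda\nabla H(\bm{x}^{n}))$ for the problem $\min_{\bm{x}} H(\bm{x})+f(\bm{x})$. We then invoke \cite[Corollary~28.9]{MR3616647}, which guarantees convergence to a minimizer when $f\in\Gamma_{0}(\R^{nN})$, $H$ is convex and differentiable with $\beta^{-1}$-Lipschitz gradient, the set of minimizers is nonempty, and the step size lies in $\left]0,2\beta\right[$.

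First we verify the hypotheses. The function $H=\lambda\sum_{j}\|L_{j}\cdot\|^{2}$ is a convex quadratic, and the computation preceding the statement shows that $\nabla H$ is $\beta^{-1}$-Lipschitz. The function $f(\bm{x})=g(\bm{x}-\bm{b})$ is a finite maximum of convex continuous functions, hence $f\in\Gamma_{0}(\R^{nN})$. Solutions of \eqref{dro_quad_sep} exist because $H+f$ is continuous and coercive ($H\geq 0$ and $f$ is coercive). Since $H+f$ is convex, the zeros of $\nabla H+\partial f$ coincide with the minimizers of \eqref{dro_quad_sep}, so the limit produced by the corollary is a solution of \eqref{dro_quad_sep}.

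Second, we identify the backward step. By \cite[Proposition~24.8(ii)]{MR3616647}, $\prox_{\lambda f}(\bm{y})=\bm{b}+\prox_{\lambda g}(\bm{y}-\bm{b})$. Applying Proposition~\ref{prox_conv_1} to $g$ at the point $\bm{y}^{n}-\bm{b}$ gives the components $(y_{i}^{n}-b^{i})/(1+2\lambda\omega_{i}\overline{p}_{i})$, where $\overline{p}$ solves \eqref{prob_aux_1} with $\alpha_{i}=\omega_{i}\|y_{i}^{n}-b^{i}\|^{2}$ and $\lambda_{i}=\lambda\omega_{i}$. Proposition~\ref{p_convex} supplies this $\overline{p}$ explicitly as $\overline{p}(\bm{\alpha},\bm{\lambda})=p^{n}$. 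In the degenerate case where all $\alpha_{i}=0$, any $p$ works and the formula still returns $\bm{b}$. Hence the third line of \eqref{alg_fb_1} equals $\prox_{\lambda f}(\bm{y}^{n})$.

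The only subtle point is notational. The same $\lambda$ serves as the regularization weight inside $H$ and as the step size, and $\beta$ itself depends on $\lambda$. The step-size condition $\lambda<2\beta$ therefore reads $2\lambda^{2}\max_{j}\|L_{j}^{\top}L_{j}\|<2$, which is a nonempty condition on $\lambda$. We will state this explicitly, treating the step $\lambda$ as fixed inside $H$, and then apply the corollary directly. We expect no real obstacle beyond this bookkeeping.
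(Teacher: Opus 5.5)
Your proposal is correct and follows the paper's own argument. Like the paper, you read the iteration as forward--backward with step $\lambda$, obtain the backward step from \cite[Proposition~24.8(ii)]{MR3616647} together with Propositions~\ref{prox_conv_1} and \ref{p_convex}, get existence of minimizers from the coercivity of $f$, and conclude with \cite[Corollary~28.9]{MR3616647}. Your extra remarks are accurate bookkeeping that the paper leaves implicit: the degenerate case $\bm{\alpha}=0$ returns $\bm{b}$, and since $\lambda$ also enters $H$, the condition $\lambda<2\beta$ amounts to $\lambda^{2}\max_{j}\|L_{j}^{\top}L_{j}\|<1$.
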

 \subsection{Max-min dispersion problem}
  In this subsection, we consider the following weighted max dispersion problem (or weighted max-min location problem):
 \begin{align}
 \label{max_dis}
\max_{x\in S}\min_{1\leq i\leq N}\omega_{i}\|x-u_{i}\|^{2},
 \end{align}
 where $S\subset\R^{n}$ is a nonempty compact convex subset and for every $i\in\{1,\ldots,N\}$, $u_{i}\in\R^{n}$ and $\omega_{i}>0$. The objective of problem~\eqref{max_dis} is to find a point in $S$ that is furthest from a given set of points $u_{1},\ldots,u_{N}$ in a weighted maxmin sense. When $\omega_{1}=\ldots=\omega_{N}$, the problem has the geometric interpretation of finding the largest Euclidean sphere with center in $S$ and enclosing no given point. Since $x\mapsto \min_{1\leq i\leq N}\|x-u_{i}\|^{2}$ is continuous, the problem~\eqref{max_dis} has solutions. The problem \eqref{max_dis} have been studied in \cite{jeyakumar2018exact} when the constraint $S$ is a polyhedral or a ball, and has applications in facility location, pattern recognition, among others \cite{dasarathy1980maxmin,johnson1990minimax}. In the general case, the weighted maxmin dispersion problem is known to be NP-hard \cite{ravi1994heuristic} even when $S$ is a box and the weights are equal \cite{haines2013convex}.

  In order to solve \eqref{max_dis}, we use the penalty method described in \cite[Section~A.2]{lopez2026projected}. That is, we solve the following penalized problem
 \begin{align}
 \label{pen_max_dis}
\min_{x\in \R^{n}}\lambda P(x)+\max_{1\leq i\leq N}-\omega_{i}\|x-u_{i}\|^{2},
 \end{align}
     where $P$ is the penalty function which is lower semicontinuous and satisfies that $P(x)\geq 0$ for all $x\in\R^{n}$ and $P(x)=0$ if and only if $x\in S$, and $\lambda>0$ is the penalty parameter. Consider the penalty function $P(x)=\frac{1}{2}d^{2}(x,S)$. Note that an equivalent formulation for the problem~\eqref{pen_max_dis} is the following
 \begin{align}
 \label{app_w_2}
\min_{\bm{x}=(x_{1},\ldots,x_{N})\in\R^{nN}} H(\bm{x})+g(\bm{x})\quad\text{s.t.}\quad\bm{x}\in \mathcal{D},
 \end{align}
 where $H\colon\R^{nN}\rightarrow\R$ is defined by $H(\bm{x})=\frac{\lambda}{2}d^{2}(x_{1},S)$, $\mathcal{D}=\{\bm{x}\in\R^{nN}:x_{1}=\cdots=x_{N}\}$ is the diagonal set, and $g$ is given by
 \begin{align}
\label{fun_g_1}
(\forall\bm{x}\in\R^{nN})\quad g(\bm{x})=\max_{1\leq i\leq N}-\omega_{i}\|x_{i}-u_{i}\|^{2}.
 \end{align}
 Observe that $\nabla H(\bm{x})=(\lambda(x_{1}-P_{S}(x_{1})),0,\ldots,0)$ is Lipschitz with constant $\lambda$. On the other hand, note that $g(\bm{x})=f(\bm{x}-\bm{u})$ for all $\bm{x}\in\R^{nN}$, where $\bm{u}=(u_{1},\ldots,u_{N})\in\R^{nN}$ and $f$ is given by \eqref{fun_nonconv_1}. Then, by Proposition~\ref{prox_w_convex} and Lemma~\ref{lema_2}, $g\in\Gamma_{\rho}(\R^{nN})$ with $\rho=2\max_{1\leq i\leq N}\omega_{i}$. Moreover, for all $\mu<1/\rho$, we have
 \begin{align*}
(\forall \bm{x}\in\R^{nN})\quad\prox_{\mu g}(\bm{x})=\bm{u}+\prox_{\mu f}(\bm{x}-\bm{u}).
 \end{align*}
The proximity operator of $f$ is given by Propositions~\ref{prox_w_convex} and \ref{prop_prob_aux_w}. To solve \eqref{app_w_2}, we use the algorithm in \cite{lopez2026projected}. From \cite[Theorem~3.2]{lopez2026projected} we obtain the following result.
\begin{theorem}
Let $\bm{x}^{1}\in\mathcal{D}$, $\alpha\in\left]0,1\right[$, and $C\in\left]0,1/4\right]$. For every $k\geq 1$, consider
\begin{equation}
\label{alg_proj_w_1}
\left\lfloor 
	\begin{array}{ll}
\mu_{k}=Ck^{-\alpha}\\
\bm{z}^{k}=(\lambda(x_{1}^{k}-P_{S}(x_{1}^{k})),0,\ldots,0)\\
\bm{x}^{k+1}=P_{\mathcal{D}}\left(\dfrac{1}{1+\lambda\mu_{k}}\left(\mu_{k}(\lambda \bm{x}^{k}-\bm{z}^{k})+\prox_{\mu_{k}g}(\bm{x}^{k})\right)\right).
\end{array}
	\right. 
\end{equation}
 For every $\bm{x}\in\R^{nN}$ and $k\geq 1$, define $F_{k}(\bm{x})=H(\bm{x})+e_{\mu_{k}}g(\bm{x})$. Assume that the sequence $(\bm{x}^{k})$ generated by the Algorithm~\ref{alg_proj_w_1} is bounded and assume that the sequence $(F_{k}(\bm{x}^{k}))$ is bounded from below by a real number $F^{*}$. Then, there exist $\ell\geq 0$ and $N_{0}\geq 1$ such that 
\begin{equation*}
\begin{aligned}
\min_{N_{0}\leq j\leq k}d(-\nabla H(\bm{x}^{j}), \partial g(\prox_{\mu_{j}g}(\bm{x}^{j}))+N_{\mathcal{D}}(\bm{x}^{j}))&\leq k^{\frac{\alpha-1}{2}}\widetilde{C}_{N_{0}} \textrm{ for all } k\geq N_{0},\\
\Vert \bm{x}^{k}-\prox_{\mu_{k}g}(\bm{x}^{k})\Vert &\leq k^{-\alpha}C \ell  \textrm{ for all } k\geq N_{0},
\end{aligned}
\end{equation*}
where $\widetilde{C}_{N_{0}}=\frac{\sqrt{2}\sqrt{\lambda+C^{-1}}}{\sqrt{\left(1+1/N_{0}\right)^{1-\alpha}-1}}\sqrt{F_{N_{0}}(\bm{x}^{N_{0}})-F^{*}+\mu_{N_{0}} \ell^{2}}$.
\end{theorem}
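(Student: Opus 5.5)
The plan is to obtain the statement as a direct instance of \cite[Theorem~3.2]{lopez2026projected}. That theorem treats problems of the form $\min H+g$ over a nonempty closed convex set, here $\mathcal{D}$. Its hypotheses are: $H$ is differentiable with Lipschitz gradient; $g$ is $\rho$-weakly convex, with a prox that is well defined for all stepsizes used; and the iterates satisfy the boundedness and lower-boundedness conditions, which we assume. So the proof reduces to verifying these structural hypotheses and checking that the abstract iteration of that theorem coincides with \eqref{alg_proj_w_1}.

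\textbf{Step 1: the smooth part.} Since $d^{2}(\cdot,S)/2$ is differentiable with gradient $\Id-P_{S}$, and $P_{S}$ is firmly nonexpansive, $\Id-P_{S}$ is nonexpansive. Hence $H(\bm{x})=\frac{\lambda}{2}d^{2}(x_{1},S)$ has gradient $\nabla H(\bm{x})=(\lambda(x_{1}-P_{S}(x_{1})),0,\ldots,0)$, which is $\lambda$-Lipschitz. This gives $\bm{z}^{k}=\nabla H(\bm{x}^{k})$.

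\textbf{Step 2: the nonsmooth part.} Write $g(\bm{x})=f(\bm{x}-\bm{u})$. By Proposition~\ref{prox_w_convex} and Lemma~\ref{lema_2}, $g\in\Gamma_{\rho}(\R^{nN})$ with $\rho=2\max_{i}\omega_{i}$. The prox $\prox_{\mu g}=\bm{u}+\prox_{\mu f}(\cdot-\bm{u})$ is well defined and explicit, via Proposition~\ref{prop_prob_aux_w}, whenever $\mu<1/\rho$. Since $\mu_{k}=Ck^{-\alpha}\leq C\leq 1/4$ is decreasing, the stepsize condition $\mu_{k}<1/\rho$ holds, at least for all $k$ large enough. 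If \cite[Theorem~3.2]{lopez2026projected} requires it from the start, this is where the restriction on $C$, together with a normalization of the weights, is used. Otherwise one simply takes $N_{0}$ large.

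\textbf{Step 3: matching the update.} Recall $\nabla e_{\mu}g=(\Id-\prox_{\mu g})/\mu$ for weakly convex $g$ and $\mu<1/\rho$. The update of \cite{lopez2026projected} projects onto $\mathcal{D}$ a combination of $\bm{x}^{k}$, $\nabla H(\bm{x}^{k})$ and $\prox_{\mu_{k}g}(\bm{x}^{k})$. We rewrite that combination as
\begin{align*}
\frac{1}{1+\lambda\mu_{k}}\left(\mu_{k}(\lambda\bm{x}^{k}-\bm{z}^{k})+\prox_{\mu_{k}g}(\bm{x}^{k})\right).
\end{align*}
This is the step most likely to need care, since the scaling by $\lambda$ reflects a specific choice of stepsize in the reference (related to $\lambda$ being the Lipschitz constant of $\nabla H$). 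We will match the two expressions term by term.

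Once the hypotheses are verified, both displayed estimates, including the constant $\widetilde{C}_{N_{0}}$, are read off verbatim from \cite[Theorem~3.2]{lopez2026projected}, with Lipschitz constant $\lambda$ and stepsizes $\mu_{k}=Ck^{-\alpha}$. The main obstacle is therefore bookkeeping: confirming that the stepsize and weak-convexity thresholds of the cited theorem are met by $C\leq1/4$ and $\rho=2\max_{i}\omega_{i}$, possibly after discarding finitely many iterations via $N_{0}$.
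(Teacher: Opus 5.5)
Your route is the paper's. Its whole argument is the appeal to \cite[Theorem~3.2]{lopez2026projected} after noting that $\nabla H$ is $\lambda$-Lipschitz and that $g\in\Gamma_{\rho}(\R^{nN})$ with $\rho=2\max_i\omega_i$. Your Step~3 also goes through: using $\nabla e_{\mu}g=(\Id-\prox_{\mu g})/\mu$, the update is exactly $P_{\mathcal{D}}(\bm{x}^{k}-\gamma_{k}\nabla F_{k}(\bm{x}^{k}))$ with $\gamma_{k}=\mu_{k}/(1+\lambda\mu_{k})=(\lambda+\mu_{k}^{-1})^{-1}$.

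The gap is in the step you call bookkeeping: with general weights, $C\le 1/4$ does not deliver the stepsize conditions. For $\rho$-weakly convex $g$ and $\mu<1/\rho$, $\nabla e_{\mu}g$ is Lipschitz with constant $\max\{\mu^{-1},\rho/(1-\rho\mu)\}$. This equals $\mu^{-1}$ iff $\mu\le 1/(2\rho)$, and only then is $\gamma_k$ the reciprocal of the Lipschitz constant of $\nabla F_k$. In the unweighted setting of \cite{lopez2026projected}, $\rho=2$, so $1/(2\rho)$ is precisely the $1/4$ in the statement. Here the same threshold is $1/(4\max_i\omega_i)$, which $C\le 1/4$ guarantees only when $\max_i\omega_i\le 1$. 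If $\max_i\omega_i\ge 2$ and $C=1/4$, even $\mu_1<1/\rho$ fails. Then Propositions~\ref{prox_w_convex}--\ref{prop_prob_aux_w} do not apply at $k=1$, and the step can be undefined. For example, take all $\omega_i=\omega>2N$ and $C=1/4$; along $y_i=u_i+te$ with $\|e\|=1$ one gets $g(\bm{y})+2\|\bm{y}-\bm{x}^{1}\|^{2}=(2N-\omega)t^{2}+O(t)\to-\infty$, so $\prox_{\mu_{1}g}(\bm{x}^{1})=\emptyset$.

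Neither of your escape routes closes this. \emph{Normalizing the weights} is not available inside the proof, because the weights are data of $g$. Replacing $(\omega_i,\lambda)$ by $(c\omega_i,c\lambda)$ turns $\prox_{\mu_k g}$ into $\prox_{c\mu_k g}$ and reproduces the original iteration with $C$ replaced by $cC$. So ``normalizing'' is just imposing the missing hypothesis on $C$. \emph{Taking $N_0$ large} is not a verbatim application either. \cite[Theorem~3.2]{lopez2026projected} concerns the sequence started at $k=1$ with an admissible constant, and the tail $\mu_k=Ck^{-\alpha}$, $k\ge K$, is not of the form $C'(k')^{-\alpha}$ after reindexing. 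You would have to reopen its descent/telescoping argument from an index $K$ with $\mu_K\le 1/(2\rho)$. You would also have to say what the early iterates are when the prox is empty or multivalued.

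The clean repair is to assume $C\le 1/(4\max_i\omega_i)$. Then $\mu_k<1/\rho$ for every $k$, the prox is single-valued and explicit, and $\gamma_k$ is the correct stepsize. With that hypothesis, your Steps~1--3 are a complete proof, identical to the paper's. The paper's one-line derivation silently carries over the unweighted constant $1/4$ and has the same omission.
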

Note that the projection onto $\mathcal{D}$ is given by \cite[Proposition~29.16]{MR3616647}:
\begin{align}
\label{proj_inter_1}
(\forall\bm{x}\in\R^{nN})\quad P_{\mathcal{D}}(\bm{x})=\left(\dfrac{1}{N}\sum_{i=1}^{N}x_{i}\right)_{j=1}^{N}.
\end{align}
When $\lambda>0$ is large enough, the previous theorem allows to find an approximated critical point of problem~\eqref{max_dis}.
\\

\small{{\bf{Acknowledgements}} The author was supported by ANID Chile under grant Fondecyt Postdoctorado N°3260185.}

\end{document}